%%% ====================================================================
%%% @LaTeX-file{
%%%   filename  = "ijmsample.tex",
%%%   copyright = "Copyright 1995, 1999 American Mathematical Society,
%%%                2005 Hebrew University Magnes Press,
%%%                all rights reserved.  Copying of this file is
%%%                authorized only if either:
%%%                (1) you make absolutely no changes to your copy,
%%%                including name; OR
%%%                (2) if you do make changes, you first rename it
%%%                to some other name.",
%%% }
%%% ====================================================================
\NeedsTeXFormat{LaTeX2e}% LaTeX 2.09 can't be used (nor non-LaTeX)
[1994/12/01]% LaTeX date must December 1994 or later
\documentclass{ijmart-mod}
%    Some definitions useful in producing this sort of documentation:
\chardef\bslash=`\\ % p. 424, TeXbook
%    Normalized (nonbold, nonitalic) tt font, to avoid font
%    substitution warning messages if tt is used inside section
%    headings and other places where odd font combinations might
%    result.

%    command name

%    LaTeX package name

%    File name

%    environment name

\hfuzz1pc % Don't bother to report overfull boxes if overage is < 1pc

\usepackage{graphicx}
\usepackage[breaklinks=true]{hyperref}
\usepackage{mathtools}
\usepackage{caption}
\usepackage{amsmath}
\usepackage{array}
\usepackage{multirow}
\usepackage{soul}

%       Theorem environments

%% \theoremstyle{plain} %% This is the default
\newtheorem{thm}{Theorem}[section]

\newtheorem{lem}[thm]{Lemma}
\newtheorem{prop}[thm]{Proposition}

\theoremstyle{definition}

\newtheorem{rem}[thm]{Remark}

\theoremstyle{remark}

%\numberwithin{equation}{section}

%    \interval is used to provide better spacing after a [ that
%    is used as a closing delimiter.

%    Notation for an expression evaluated at a particular condition. The
%    optional argument can be used to override automatic sizing of the
%    right vert bar, e.g. \eval[\biggr]{...}_{...}
\newcommand{\eval}[2][\right]{\relax
  \ifx#1\right\relax \left.\fi#2#1\rvert}

%    Enclose the argument in vert-bar delimiters:

%    Enclose the argument in double-vert-bar delimiters:

%\setcounter{tocdepth}{5}

\begin{document}
\title{Small kissing polytopes}

\author[A. Deza]{Antoine Deza}
\address{McMaster University, Hamilton, Ontario, Canada}
\email{deza@mcmaster.ca} 

\author[Z. Liu]{Zhongyuan Liu}
\address{McMaster University, Hamilton, Ontario, Canada}
\email{liu164@mcmaster.ca} 

\author[L. Pournin]{Lionel Pournin}
\address{Universit{\'e} Paris 13, Villetaneuse, France}
\email{lionel.pournin@univ-paris13.fr}

\maketitle

{\centering \large \emph{Dedicated to Tam{\'a}s Terlaky on the occasion of his 70th birthday}\par}

%\date{Received on MONTH, YEAR}
%\issueinfo{VOL}{NUM}{MONTH}{YEAR}
%\doiinfo{10.1007/DOI-NUMBER}
\begin{abstract}
A lattice $(d,k)$-polytope is the convex hull of a set of points in $\mathbb{R}^d$ whose coordinates are integers ranging between $0$ and $k$. We consider the smallest possible distance $\varepsilon(d,k)$ between two disjoint lattice $(d,k)$-polytopes. %We show that this distance can be computed by optimizing a certain algebraic fraction over a certain subset $\mathcal{S}$ of the lattice points contained in $[-k,k]^{d^2}$
We propose an algebraic model for this distance and derive from it an explicit formula for $\varepsilon(2,k)$. Our model also allows for the computation of previously intractable values of $\varepsilon(d,k)$. In particular, we compute $\varepsilon(3,k)$ when $4\leq{k}\leq8$, $\varepsilon(4,k)$ when $2\leq{k}\leq3$, and  $\varepsilon(6,1)$.\\
\end{abstract}

%\tableofcontents

\section{Introduction}\label{DLP.sec.1}
The smallest possible distance between two disjoint lattice $(d,k)$-polytopes---convex hulls of sets of points with integer coordinates in $[0,k]^d$---is a natural quantity in discrete geometry. This quantity, which we refer to as $\varepsilon(d,k)$ in the sequel, is connected to the complexity of algorithms such as the linear minimization formulation by G{\'a}bor Braun, Sebastian Pokutta, and Robert Weismantel~\cite{BraunPokuttaWeismantel2022} of the von Neumann alternating projections algorithm~\cite{VonNeumann1949}. It is also related to several notions that appear in optimization. For instance, the \emph{facial distance} of a polytope $P$, studied by Javier Pe{\~n}a and Daniel Rodriguez~\cite{PenaRodriguez2019} and by David Gutman and Javier Pe{\~n}a~\cite{GutmanPena2018, Pena2019}, is the smallest possible distance between a face $F$ of $P$ and the convex hull of the vertices of $P$ that are not contained in $F$. The \emph{vertex-facet distance} of a polytope $P$, considered by Amir Beck and Shimrit Shtern \cite{BeckShtern2017}, is the smallest possible distance between the affine hull of a facet $F$ of $P$ and a vertex of $P$ that does not belong to $F$. The smallest possible vertex-facet distance of a lattice $(d,1)$-simplex has been estimated by Noga Alon and V\u{a}n V{\~u} \cite{AlonVU1997}. Another such notion is the \emph{pyramidal width} of a finite set of points, investigated by Simon Lacoste-Julien and Martin Jaggi~\cite{Lacoste-JulienJaggi2015} and by Luis Rademacher and Chang Shu~\cite{RademacherShu2022}, which coincides with the facial distance of the convex hull of these points~\cite{PenaRodriguez2019}. G{\'a}bor Braun, Alejandro Carderera, Cyrille Combettes, Hamed Hassani, Amin Karbasi, Aryan Mokhtari, and Sebastian Pokutta provide a comprehensive overview of these notions in~\cite{BraunCardereraCombettesHassaniKarbasiMokhtariPokutta2022}.

%In the case of a pair lattice polytopes $P$ and $Q$, the smallest possible distance is a function of the ambient dimension and the range for the vertex coordinates of $P$ and $Q$ since there are finitely many such pairs of polytopes.  
Lower and upper bounds on $\varepsilon(d,k)$ that are almost matching as $d$ goes to infinity and a number of properties of this quantity as a function of $d$ and $k$ have been established by Shmuel Onn, Sebastian Pokutta, and two of the authors in~\cite{DezaOnnPokuttaPournin2024}. The values of $\varepsilon(2,k)$ when $1\leq{k}\leq6$, of $\varepsilon(3,k)$ when $1\leq{k}\leq3$, of $\varepsilon(4,1)$, and of $\varepsilon(5,1)$ have been computed as a consequence of these properties and are reported in~\cite{DezaOnnPokuttaPournin2024} (see also the non-bolded entries in Table~\ref{DLP.sec.2.tab.2}).

An algebraic model that allows for the computation of previously intractable values of $\varepsilon(d,k)$ is developed in Section~\ref{DLP.sec.2}. More precisely, $\varepsilon(d,k)$ is bounded by the smallest non-zero value of a certain algebraic fraction over a subset of the lattice points contained in the hypercube $[-k,k]^{d^2}$. Using this model, we provide the following formula for $\varepsilon(2,k)$ in Section~\ref{DLP.sec.3}.
\begin{thm}\label{DLP.sec.3.thm.1}
If $k$ is greater than $1$, then
$$
\varepsilon(2,k)=\frac{1}{\sqrt{(k-1)^2+k^2}}\mbox{.}
$$
\end{thm}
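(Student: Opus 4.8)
The plan is to prove the two matching inequalities $\varepsilon(2,k)\geq 1/\sqrt{(k-1)^2+k^2}$ and $\varepsilon(2,k)\leq 1/\sqrt{(k-1)^2+k^2}$. For the first, let $P$ and $Q$ be disjoint nonempty lattice $(2,k)$-polytopes realizing the minimal distance, and let $p_0\in P$ and $q_0\in Q$ be points at distance $\varepsilon(2,k)$, so that $p_0$ is the closest point of $P$ to $q_0$ and $q_0$ the closest point of $Q$ to $p_0$. In the plane each of $p_0$ and $q_0$ is either a vertex of its polytope or lies in the relative interior of an edge of it, since a closest point cannot lie in the interior of a full-dimensional polytope. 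If both are vertices they are distinct lattice points and $\varepsilon(2,k)\geq 1\geq 1/\sqrt{(k-1)^2+k^2}$, so we may assume, say, that $q_0$ lies in the relative interior of an edge $e$ of $Q$; let $v$ be a primitive integer vector directing $e$, with both coordinates in $[-k,k]$.

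The optimality of the pair $(p_0,q_0)$ forces the segment $[p_0,q_0]$ to be orthogonal to $e$. If $p_0$ is a vertex of $P$, then it is a lattice point not lying on the line $\ell$ spanned by $e$: otherwise the closest point of $e$ to $p_0$ would be an endpoint of $e$, contradicting $q_0\in\mathrm{relint}(e)$, or $p_0$ would belong to $e\subseteq Q$, contradicting disjointness. Picking a lattice point $b$ on $e$, the $2\times 2$ determinant with rows $p_0-b$ and $v$ is then a nonzero integer, so $\varepsilon(2,k)=\lVert p_0-q_0\rVert=\mathrm{dist}(p_0,e)\geq\mathrm{dist}(p_0,\ell)\geq 1/\lVert v\rVert$. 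If instead $p_0$ lies in the relative interior of an edge $e'$ of $P$, the segment $[p_0,q_0]$ is orthogonal to $e'$ as well, so $e$ and $e'$ are parallel; they cannot be collinear, by the argument just given applied to endpoints, and hence $\varepsilon(2,k)$, which equals the distance between the lines spanned by $e$ and $e'$, equals the absolute value of the determinant with rows $b-a$ and $v$ divided by $\lVert v\rVert$, for lattice points $a\in e'$ and $b\in e$; this is again at least $1/\lVert v\rVert$. So in every case $\varepsilon(2,k)\geq 1/\lVert v\rVert$.

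It remains to observe that $\lVert v\rVert^2\leq (k-1)^2+k^2$: the only integer vectors with $\lVert v\rVert^2>(k-1)^2+k^2$ and both coordinates in $[-k,k]$ are $(\pm k,\pm k)$, which fail to be primitive as soon as $k>1$. This yields the lower bound. For the upper bound we exhibit the pair $P=\mathrm{conv}\{(0,0),(k-1,k)\}$ and $Q=\{(1,1)\}$, which are lattice $(2,k)$-polytopes because $k>1$. The point $(1,1)$ does not lie on the line $\ell$ spanned by $P$, since $(k-1)\cdot 1-k\cdot 1=-1\neq 0$, and the foot of the perpendicular from $(1,1)$ to $\ell$ divides the segment $P$ in the ratio $\lambda:(1-\lambda)$ with $\lambda=(2k-1)/\bigl((k-1)^2+k^2\bigr)$, which lies strictly between $0$ and $1$ because $2k^2-4k+2=2(k-1)^2>0$. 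Hence $P$ and $Q$ are disjoint and their distance equals $1/\sqrt{(k-1)^2+k^2}$, completing the proof. The upper bound can also be read off from the algebraic model of Section~\ref{DLP.sec.2} by minimizing the fraction it attaches to the case $d=2$.

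The main obstacle is the lower-bound case analysis, in particular the subcase where $p_0$ is interior to an edge of $P$: one must notice that $e$ and $e'$ are forced to be parallel and then exclude the collinear configuration using disjointness, and one must check throughout that each lattice determinant that appears is genuinely nonzero. The accompanying maximization of $\lVert v\rVert$ over primitive lattice vectors in $[-k,k]^2$ is elementary but must be carried out with care, and it is precisely there that the hypothesis $k>1$ is used.
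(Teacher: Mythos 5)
Your proof is correct, but your lower bound follows a genuinely different route from the paper's. The paper first invokes the reduction theorem (Theorem~\ref{DHLOPP.sec.4.lem.1}, imported from \cite{DezaOnnPokuttaPournin2024}) to restrict attention to a point versus a segment, then feeds this configuration into the algebraic model of Section~\ref{DLP.sec.2} to reduce the problem to minimizing $|x_2x_3-x_1x_4|/\sqrt{x_1^2+x_2^2}$ over lattice points of $[-k,k]^4$, which it handles by splitting into the cases $x_1=x_2$ and $x_1\neq x_2$. You instead argue directly on a closest-point pair $(p_0,q_0)$ of two arbitrary disjoint lattice polygons: the case analysis vertex/relative-interior-of-edge, the orthogonality forced by optimality, and the observation that every relevant lattice determinant is a nonzero integer reduce everything to the bound $\varepsilon(2,k)\geq 1/\lVert v\rVert$ for a primitive edge direction $v$, after which maximizing $\lVert v\rVert$ over primitive vectors of $[-k,k]^2$ (where the hypothesis $k>1$ rules out $(\pm k,\pm k)$) gives the claim. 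Your argument is self-contained and more elementary --- it does not need the external reduction theorem nor the least-squares machinery --- whereas the paper's route is chosen because the same algebraic model is what drives the computations in higher dimensions; your primitivity argument plays exactly the role of the paper's case $x_1=x_2$ (a non-primitive direction such as $(k,k)$ reduces to $(1,1)$ and yields the $1/\sqrt{2}$ bound). The extremal pair you exhibit for the upper bound is the paper's example up to swapping the two coordinates, and your verification that the foot of the perpendicular lies in the relative interior of the segment is a detail the paper leaves implicit.
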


\begin{table}
\begin{center}
\begin{tabular}{>{\centering}p{0.5cm}cccccccc}
\multirow{2}{*}{$d$}&  \multicolumn{8}{c}{$k$}\\
\cline{2-9}
 & $1$ & $2$ & $3$ & $4$ & $5$ & $6$ & $7$ & $8$\\
\hline & \\[-1.1\bigskipamount]
%$2$ & $\sqrt{2}$ & \!$\sqrt{5}$\! & \!$\sqrt{13}$\! & \!$5$\! & \!$\sqrt{41}$\! & \!$\sqrt{61}$\! & \!$\bf{\sqrt{85}}$\! & \!$\bf{\sqrt{113}}$\!\\
$3$ & $\sqrt{6}$ & \!$5\sqrt{2}$\! & \!$\sqrt{299}$\! & \!$\bf{5\sqrt{42}}$\! & \!$\bf{\sqrt{2870}}$\! & \!$\bf{\sqrt{6466}}$\! & \!$\bf{5\sqrt{510}}$\! & \!$\bf{\sqrt{22826}}$\!\\
$4$ & $3\sqrt{2}$ & \!$\bf{2\sqrt{113}}$\! & \!$\bf{11\sqrt{71}}$\!\\
$5$ & $\sqrt{58}$\\
$6$ & $\bf{\sqrt{202}}$ &
\end{tabular}
\end{center}
\caption{The known values of $1/\varepsilon(d,k)$ when $d$ is at least $3$.}\label{DLP.sec.2.tab.2}
\end{table}

Finally, we show in Section~\ref{DLP.sec.4} how the subset of the lattice points in $[-k,k]^{d^2}$ over which the minimization is performed can be reduced, and discuss the computational efficiency of the resulting strategy. This makes it possible to determine values of $\varepsilon(d,k)$ whose computation was previously intractable. Using this strategy, we compute $\varepsilon(3,k)$ when $4\leq{k}\leq8$, $\varepsilon(4,k)$ when $2\leq{k}\leq3$, and  $\varepsilon(6,1)$. These values of $\varepsilon(d,k)$ are the inverse of the numbers shown in bold in Table~\ref{DLP.sec.2.tab.2}. In addition, for each of the obtained values of $\varepsilon(d,k)$, we provide an explicit pair of lattice $(d,k)$-polytopes whose distance is precisely $\varepsilon(d,k)$. We shall refer to such a pair of polytopes as \emph{kissing polytopes}.

\section{A least squares model for polytope distance}\label{DLP.sec.2}

Let us consider two disjoint lattice $(d,k)$-simplices $P$ and $Q$ whose affine hulls are disjoint. Denote by $p^0$ to $p^{n}$ the vertices of $P$ and by $q^0$ to $q^{m}$ the vertices of $Q$ where $n$ and $m$ denote the dimension of $P$ and $Q$, respectively. The distance of $P$ and $Q$ is the smallest possible value of
\begin{equation}\label{DLP.sec.1.eq.1}
\left\|\sum_{i=0}^n\lambda_ip^i-\sum_{i=0}^m\mu_iq^i\right\|^2
\end{equation}
where $\lambda_0$ to $\lambda_n$ and $\mu_0$ to $\mu_m$ are two sets of non-negative numbers that each sum to $1$. The constraint that each of these sets of numbers sum to $1$ can be replaced by expressing $\lambda_0$ and $\mu_0$ as
\begin{equation}\label{DLP.sec.1.eq.1.25}
\left\{
\begin{array}{l}
\displaystyle\lambda_0=1-\sum_{i=1}^n\lambda_i\\[1.5\bigskipamount]
\displaystyle\mu_0=1-\sum_{i=1}^m\mu_i
\end{array}
\right.
\end{equation}
and substituting them in (\ref{DLP.sec.1.eq.1}) by these expressions. As a consequence,
$$
d(P,Q)^2=\min_{\substack{\lambda\in\Delta_n\\\mu\in\Delta_m}}f_{P,Q}(\lambda,\mu)
$$
where
\begin{equation}\label{DLP.sec.1.eq.1.5}
f_{P,Q}(\lambda,\mu)=\left\|p^0-q^0+\sum_{i=1}^n\lambda_i\bigl(p^i-p^0\bigr)-\sum_{i=1}^m\mu_i\bigl(q^i-q^0\bigr)\right\|^2
\end{equation}
and $\Delta_j$ denotes the $j$-dimensional simplex
$$
\Delta_j=\biggl\{x\in[0,+\infty[^j:\sum_{i=1}^jx_i\leq1\biggr\}\mbox{.}
$$

We will consider $f_{P,Q}$ as a function from $\mathbb{R}^n\!\mathord{\times}\mathbb{R}^m$ to $[0,+\infty[$. Note that this function depends on the ordering of the vertices of $P$ and $Q$ but this ordering will not play a role in the sequel, and we assume that a prescribed ordering has been fixed for the vertices of each pair of polytopes $P$ and $Q$. 

Relaxing the constraint that $\lambda$ and $\mu$ should be contained in $\Delta_n$ and $\Delta_m$ provides a lower bound on $d(P,Q)$ of the form
\begin{equation}\label{DLP.sec.1.eq.2}
d(P,Q)^2\geq\min_{\substack{\lambda\in\mathbb{R}^n\\\mu\in\mathbb{R}^m}}f_{P,Q}(\lambda,\mu)\mbox{.}
\end{equation}

Note that the right-hand side of (\ref{DLP.sec.1.eq.2}) is the distance between the affine hull of $P$ and the affine hull of $Q$. In particular, the accuracy of this bound is related to how close the distance of $P$ and $Q$ is to the distance of their affine hulls.

Now consider the $d\mathord{\times}(d-1)$ matrix
\begin{equation}\label{DLP.sec.1.eq.4}
A=\left[
\begin{array}{cccccc}
p^1_1-p^0_1 & \cdots & p^n_1-p^0_1 & q^1_1-q^0_1 & \cdots & q^m_1-q^0_1\\
\vdots & & \vdots & \vdots & & \vdots\\
p^1_d-p^0_d & \cdots & p^n_d-p^0_d & q^1_d-q^0_d & \cdots & q^m_d-q^0_d\\
\end{array}
\right]
\end{equation}
and the vector
\begin{equation}\label{DLP.sec.1.eq.4.5}
b=q^0-p^0\mbox{.}
\end{equation}

It will be important to keep in mind that $A$ and $b$ depend on $P$ and $Q$. Observe that, with these notations, (\ref{DLP.sec.1.eq.1.5}) can be rewritten into
\begin{equation}\label{DLP.sec.1.eq.4.6}
f_{P,Q}(\lambda,\mu)=\left\|A\chi-b\right\|^2
\end{equation}
where $\chi$ is the vector such that
\begin{equation}\label{DLP.sec.1.eq.4.25}
\chi^t=\bigl[\lambda_1, \cdots,\lambda_n,-\mu_1,\cdots,-\mu_m\bigr]\mbox{.}
\end{equation}

\begin{rem}
According to (\ref{DLP.sec.1.eq.4.6}), $f_{P,Q}(\lambda,\mu)$ is the sum of the squares of the coordinates of $A\chi-b$. In particular, negating both a row of $A$ and the corresponding coefficient of $b$ will not change the value of that function either. Likewise, negating a column of $A$ and the corresponding row of $\chi$ will not change the value of $f_{P,Q}(\lambda,\mu)$. As a consequence, the right-hand side of (\ref{DLP.sec.1.eq.2}) does not change when a subset of the columns of $A$ are negated or a subset of its rows are negated together with the corresponding coefficients of $b$.
\end{rem}

Let us now give an expression for the right-hand side of (\ref{DLP.sec.1.eq.2}).

\begin{lem}\label{DLP.sec.1.lem.1}
$f_{P,Q}$ admits a unique minimum over $\mathbb{R}^n\!\mathord{\times}\mathbb{R}^m$ if and only if $A^tA$ is non-singular. Moreover, in that case,
\begin{equation}\label{DLP.sec.1.lem.1.eq.1}
\min_{\substack{\lambda\in\mathbb{R}^n\\\mu\in\mathbb{R}^m}}f_{P,Q}(\lambda,\mu)=\bigl\|A(A^tA)^{-1}A^tb-b\bigr\|^2\mbox{.}
\end{equation}
\end{lem}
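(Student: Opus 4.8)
The plan is to reduce the statement to the classical least squares computation. First I would note that the map sending $(\lambda,\mu)$ to the vector $\chi$ defined by (\ref{DLP.sec.1.eq.4.25}) is a linear bijection from $\mathbb{R}^n\mathord{\times}\mathbb{R}^m$ onto $\mathbb{R}^{n+m}$, since it merely negates the last $m$ coordinates. By (\ref{DLP.sec.1.eq.4.6}) this bijection turns $f_{P,Q}$ into the function $g(\chi)=\|A\chi-b\|^2$. Hence $f_{P,Q}$ admits a unique minimum over $\mathbb{R}^n\mathord{\times}\mathbb{R}^m$ precisely when $g$ admits a unique minimum over $\mathbb{R}^{n+m}$, and in that case the two minimal values coincide; so it suffices to work with $g$.

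Next I would expand $g(\chi)=\chi^tA^tA\chi-2b^tA\chi+b^tb$ and observe that $A^tA$, being a Gram matrix, is positive semidefinite, so $g$ is convex. Its gradient is $2(A^tA\chi-A^tb)$, and by convexity a point minimizes $g$ if and only if this gradient vanishes there, that is, if and only if it solves the normal equations $A^tA\chi=A^tb$. I would also record that these equations are always consistent: $A^tb$ lies in the column space of $A^t$, which coincides with the column space of $A^tA$, since the latter is contained in the former and $\operatorname{rank}(A^tA)=\operatorname{rank}(A^t)$. Equivalently, $g$ always attains its infimum, that infimum being the squared distance from $b$ to the (finite-dimensional, hence closed) subspace spanned by the columns of $A$. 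In any case the set of minimizers of $g$ is the translate of $\ker(A^tA)$ by any particular solution of the normal equations.

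Both directions now follow. If $A^tA$ is non-singular, the normal equations have the single solution $\chi=(A^tA)^{-1}A^tb$, so the minimum is unique, and substituting this value of $\chi$ into $\|A\chi-b\|^2$ yields exactly the right-hand side of (\ref{DLP.sec.1.lem.1.eq.1}). Conversely, if $A^tA$ is singular, I would pick a nonzero $v$ with $A^tAv=0$; then $\|Av\|^2=v^tA^tAv=0$, so $Av=0$, and therefore $\chi+v$ is a minimizer of $g$ whenever $\chi$ is one, which shows that the minimum is not unique.

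I do not expect a serious obstacle, as this is the standard normal-equations argument. The one point that deserves care is the converse implication: to deduce from the singularity of $A^tA$ that there is \emph{no} unique minimum, one must be sure that a minimum exists at all, so that the failure of uniqueness stems from there being several minimizers rather than none. This is exactly why I would include the remark above that the normal equations are always consistent, equivalently that $\|A\chi-b\|^2$ attains its infimum as the squared distance from $b$ to the column space of $A$.
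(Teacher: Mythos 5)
Your proof is correct and follows essentially the same route as the paper's: reduce to the least-squares form $\|A\chi-b\|^2$, use convexity to characterize the minimizers as the solutions of the normal equations $A^tA\chi=A^tb$, and read off both the uniqueness criterion and the minimal value. Your explicit remarks that the normal equations are always consistent and that a nonzero $v\in\ker(A^tA)$ satisfies $Av=0$ make the converse direction slightly more airtight than the paper's brief ``it immediately follows,'' but the argument is the same.
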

\begin{proof}
The minimum of $f_{P,Q}$ is reached at a pair $(\lambda,\mu)$ from $\mathbb{R}^n\!\mathord{\times}\mathbb{R}^m$ such that all the partial derivatives of $f_{P,Q}$ simultaneously vanish, that is when
$$
\frac{\partial{f_{P,Q}}}{\partial{\lambda_i}}(\lambda,\mu)=0
$$
for all $i$ satisfying $1\leq{i}\leq{n}$ and
$$
\frac{\partial{f_{P,Q}}}{\partial{\mu_i}}(\lambda,\mu)=0
$$
for all $i$ satisfying $1\leq{i}\leq{m}$. Since $f_{P,Q}$ is a quadratic function of $\lambda$ and $\mu$, its partial derivatives are linear. In other words, finding the minimum of $f_{P,Q}$ over  $\mathbb{R}^n\!\mathord{\times}\mathbb{R}^m$ amounts to solve a least squares problem. In particular setting to $0$ all of these partial derivatives results in the system of linear equalities
\begin{equation}\label{DLP.sec.1.eq.3}
A^tA\chi=A^tb\mbox{.}
\end{equation}

Since $f_{P,Q}$ is a convex quadratic function, the solutions of (\ref{DLP.sec.1.eq.3}) correspond bijectively via (\ref{DLP.sec.1.eq.4.25}) with the pairs $(\lambda,\mu)$ such that $f_{P,Q}$ is minimal. It immediately follows that $f_{P,Q}$ admits a unique minimum over $\mathbb{R}^n\!\mathord{\times}\mathbb{R}^m$ if and only if $A^tA$ is non-singular. Moreover, in that case, the unique solution of (\ref{DLP.sec.1.eq.3}) is
$$
\chi=(A^tA)^{-1}A^tb
$$
and substituting this expression of $\chi$ in (\ref{DLP.sec.1.eq.4.6}) completes the proof.
\end{proof}

According to (\ref{DLP.sec.1.eq.2}), Lemma~\ref{DLP.sec.1.lem.1} provides a lower bound on $d(P,Q)$ in the case when $A^tA$ is non-singular. The following remark provides a necessary and sufficient condition on $(A^tA)^{-1}A^tb$ for this bound to be sharp.

\begin{rem}\label{DLP.sec.1.rem.1}
Recall that the minimum of $f_{P,Q}$ over $\mathbb{R}^n\!\mathord{\times}\mathbb{R}^m$ is the distance between the affine hulls of $P$ and $Q$. Therefore, if $A^tA$ is non-singular, then according to (\ref{DLP.sec.1.eq.4.6}), (\ref{DLP.sec.1.eq.4.25}), and Lemma~\ref{DLP.sec.1.lem.1}, the first $n$ coordinates of
$$
\chi=(A^tA)^{-1}A^tb
$$
provide an affine combination $p^\star$ of the vertices of $P$ and its last $m$ coordinates an affine combination $q^\star$ of the vertices of $Q$ such that
$$
d(p^\star,q^\star)=d\bigl(\mathrm{aff}(P),\mathrm{aff}(Q)\bigr)\mbox{.}
$$

In particular, if the first $n$ coefficients of $\chi$ are all non-negative and sum to at most $1$ while its last $m$ coefficients are all non-positive and sum to at least $-1$, then $p^\star$ belongs to $P$ and $q^\star$ to $Q$. In that case, the distance of $P$ and $Q$ coincides with the distance of their affine hulls. Otherwise the distance of $P$ and $Q$ is strictly greater than the distance of their affine hulls.
\end{rem}

We shall now focus on certain pairs of simplices whose distance is precisely $\varepsilon(d,k)$. The following is proven in \cite{DezaOnnPokuttaPournin2024} (see Theorem 5.2 therein).

\begin{thm}\label{DHLOPP.sec.4.lem.1}
There exist two lattice $(d,k)$-polytopes $P$ and $Q$ such that
\begin{enumerate}
\item[(i)] $d(P,Q)$ is equal to $\varepsilon(d,k)$,
\item[(ii)] both $P$ and $Q$ are simplices,
\item[(iii)] $\mathrm{dim}(P)+\mathrm{dim}(Q)$ is equal to $d-1$, and
\item[(iv)] the affine hulls of $P$ and $Q$ are disjoint.
\end{enumerate}
\end{thm}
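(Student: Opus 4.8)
The plan is to start from an optimal pair and normalize it in stages. There are only finitely many lattice $(d,k)$-polytopes, so the infimum defining $\varepsilon(d,k)$ is attained: pick disjoint lattice $(d,k)$-polytopes $P_0$ and $Q_0$ with $d(P_0,Q_0)=\varepsilon(d,k)$, together with points $p^\star\in P_0$ and $q^\star\in Q_0$ realizing that distance. By Carath\'eodory's theorem $p^\star$ is a convex combination of an affinely independent set of vertices of $P_0$, so the convex hull $P$ of that set is a lattice $(d,k)$-simplex containing $p^\star$; likewise one obtains a lattice $(d,k)$-simplex $Q\subseteq Q_0$ with $q^\star\in Q$. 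Since $P\subseteq P_0$ and $Q\subseteq Q_0$ are disjoint and $\varepsilon(d,k)\leq d(P,Q)\leq\|p^\star-q^\star\|=\varepsilon(d,k)$, we have $d(P,Q)=\varepsilon(d,k)$ and $(p^\star,q^\star)$ is still a closest pair. This secures (i) and (ii); it remains to arrange (iii) and (iv).

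The second stage is a face reduction. Replace $P$ by the smallest face of $P$ containing some closest point of $P$ to $Q$, do the symmetric thing for $Q$, and iterate if necessary. Because a face of a simplex is a simplex and because passing to a face containing a closest point leaves the distance unchanged, one stays within pairs of disjoint lattice $(d,k)$-simplices at distance $\varepsilon(d,k)$; and, using only that a face of a face is a face, after stabilization \emph{every} closest point of $P$ to $Q$ lies in $\mathrm{relint}(P)$ and every closest point of $Q$ to $P$ lies in $\mathrm{relint}(Q)$. Fix a closest pair $p^\star\in\mathrm{relint}(P)$, $q^\star\in\mathrm{relint}(Q)$ and put $v=q^\star-p^\star\neq 0$. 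As $p^\star$ minimizes $x\mapsto\|x-q^\star\|^2$ over $P$ and lies in $\mathrm{relint}(P)$, a first-order computation gives $\langle v,p^i-p^0\rangle=0$ for all vertices $p^i$ of $P$, and symmetrically $\langle v,q^i-q^0\rangle=0$ for all vertices $q^i$ of $Q$. Property (iv) follows, since $z\in\mathrm{aff}(P)\cap\mathrm{aff}(Q)$ would make $z-p^\star$ and $z-q^\star$ both orthogonal to $v$, whence $\|v\|^2=\langle v,(z-p^\star)-(z-q^\star)\rangle=0$. A similar ``translate and push to the boundary'' argument shows that the linear spans $\mathrm{dir}(P)$ of $\{p^i-p^0\}$ and $\mathrm{dir}(Q)$ of $\{q^i-q^0\}$ meet only in $0$: a common nonzero direction $u$ would keep $(p^\star+tu,q^\star+tu)$ a closest pair until one of the two points first reaches the boundary of its simplex, yielding a closest point outside a relative interior, contrary to the previous sentence. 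Since $v$ is a nonzero vector orthogonal to both $\mathrm{dir}(P)$ and $\mathrm{dir}(Q)$, the sum $\mathrm{dir}(P)+\mathrm{dir}(Q)$ is direct and contained in the hyperplane $v^{\perp}$, so $\mathrm{dim}(P)+\mathrm{dim}(Q)\leq d-1$.

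The remaining, and genuinely harder, point is the matching inequality $\mathrm{dim}(P)+\mathrm{dim}(Q)\geq d-1$, equivalently that an optimal pair cannot be squeezed into a proper affine subspace. By the orthogonality relations $P$ and $Q$ lie in the parallel hyperplanes $H_P=\{x:\langle v,x\rangle=\langle v,p^\star\rangle\}$ and $H_Q=\{x:\langle v,x\rangle=\langle v,q^\star\rangle\}$, which are at distance $\|v\|=\varepsilon(d,k)$ apart, and $\mathrm{aff}(P\cup Q)$ has dimension $\mathrm{dim}(P)+\mathrm{dim}(Q)+1$. Assume $\mathrm{dim}(P)+\mathrm{dim}(Q)<d-1$, and choose the pair with $\mathrm{dim}(P)+\mathrm{dim}(Q)$ maximal among the pairs produced above (one may also assume, via the Remark following (\ref{DLP.sec.1.eq.4.25}), that $v$ has nonnegative entries). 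The idea is to enlarge $P$ by a lattice point $p'\in[0,k]^d$ with $\langle v,p'\rangle\leq\langle v,p^\star\rangle$ lying off $\mathrm{aff}(P\cup Q)$ (or, symmetrically, to enlarge $Q$ by a lattice point with $\langle v,\cdot\rangle\geq\langle v,q^\star\rangle$ off $\mathrm{aff}(P\cup Q)$): the new simplex remains on the far side of $H_P$ from $Q$, hence stays disjoint from $Q$ and at distance exactly $\varepsilon(d,k)$ from it, while gaining a dimension and contradicting maximality. The crux is to produce such a lattice point. If none existed, every lattice point of $[0,k]^d$ outside $\mathrm{aff}(P\cup Q)$ would lie strictly between $H_P$ and $H_Q$, hence in a slab of width $\varepsilon(d,k)$; in particular all $\langle v,\cdot\rangle$-minimizing and $\langle v,\cdot\rangle$-maximizing vertices of $[0,k]^d$ would sit in the proper subspace $\mathrm{aff}(P\cup Q)$, while at least $2^{d-1}$ of the $2^d$ vertices of $[0,k]^d$ would be trapped in that slab. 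Turning this into a contradiction is the substantive part: a case analysis on $v$ (the axis case gives two lattice hyperplanes at integer distance at least $1$; the general case a packing/width estimate using that $[0,k]^d$ has width at least $k$ in every direction) forces a positive lower bound on $\varepsilon(d,k)$ incompatible with the elementary bounds $\varepsilon(d,k)\leq 1$ and $\varepsilon(d,k)<1$ for $d\geq 2$; the degenerate case $d=1$ is trivial since then $d-1=0$ and any two distinct lattice points work. Everything other than this last estimate is routine.
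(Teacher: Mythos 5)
First, a point of comparison: the paper does not prove this statement at all --- it is imported verbatim from \cite{DezaOnnPokuttaPournin2024} (Theorem~5.2 there), so there is no in-paper proof to measure you against. On its own terms, your first two stages are sound and standard: Carath\'eodory secures (i) and (ii); the face reduction together with the first-order condition at a relative-interior minimizer gives $v=q^\star-p^\star\perp\mathrm{dir}(P),\mathrm{dir}(Q)$, hence (iv), the directness of $\mathrm{dir}(P)+\mathrm{dir}(Q)$, and $\mathrm{dim}(P)+\mathrm{dim}(Q)\leq d-1$.

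The gap is in the reverse inequality $\mathrm{dim}(P)+\mathrm{dim}(Q)\geq d-1$, and it is twofold. First, the step you yourself call ``the substantive part'' --- producing a lattice point of $[0,k]^d$ off $\mathrm{aff}(P\cup Q)$ on the far side of $H_P$ or $H_Q$ --- is not actually proved, and the contradiction you gesture at does not go through as stated: writing $v=cw$ with $w$ a primitive integer vector, one finds $N:=c\|w\|^2=\langle w,q^0-p^0\rangle\in\mathbb{Z}_{>0}$ and $\varepsilon(d,k)=\|v\|=N/\|w\|$; the hypothesis that every lattice point off $\mathrm{aff}(P\cup Q)$ lies strictly inside the slab only forces $N\geq 2$, i.e.\ $\varepsilon(d,k)\geq 2/\|w\|$, which is perfectly compatible with $\varepsilon(d,k)<1$ since $\|w\|$ can be of order $k\sqrt{d}$. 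Any real contradiction must instead come from showing that the lattice points outside the slab cannot all be confined to the proper subspace $\mathrm{aff}(P\cup Q)$, which you do not establish. Second, the maximality bookkeeping is off: the enlarged pair $\bigl(\mathrm{conv}(P\cup\{p'\}),Q\bigr)$ need not be ``among the pairs produced above'' --- in particular $p'$ generally leaves the hyperplane $H_P$, so the new affine hulls may intersect and the orthogonal normal form is lost --- hence you cannot immediately contradict maximality, and re-running the face reduction could shrink the dimension back down. Note that the tools the paper already imports would close this gap in one line: if $\mathrm{dim}(P)+\mathrm{dim}(Q)<d-1$ then, by (iv), $\mathrm{dim}(P\cup Q)=\mathrm{dim}(P)+\mathrm{dim}(Q)+1\leq d-1$, so (\ref{DLP.sec.1.eq.2.5}) gives $d(P,Q)\geq\varepsilon(d-1,k)>\varepsilon(d,k)$ by the strict monotonicity of $\varepsilon(\cdot,k)$, contradicting (i). Everything in your argument up to that point survives; the final estimate you deferred is exactly the part that needs to be either carried out in full or replaced by this citation.
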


We shall prove that when $P$ and $Q$ satisfy the assertions (i) to (iv) in the statement of Theorem~\ref{DHLOPP.sec.4.lem.1}, $f_{P,Q}$ admits a unique minimum over $\mathbb{R}^n\!\mathord{\times}\mathbb{R}^m$ as a consequence of two results from \cite{DezaOnnPokuttaPournin2024}. The first of these results states that
\begin{equation}\label{DLP.sec.1.eq.2.5}
d(P,Q)\geq\varepsilon\bigl(\mathrm{dim}(P\cup{Q}),k\bigr)
\end{equation}
(see Lemma 4.3 in \cite{DezaOnnPokuttaPournin2024}) and the second that, when $k$ is fixed, $\varepsilon(d,k)$ is a strictly decreasing function of $d$ (see Theorem 5.1 in \cite{DezaOnnPokuttaPournin2024}).
%
%We derive the following property from these results.

\begin{prop}\label{DLP.sec.1.thm.1}
If $P$ and $Q$ satisfy the assertions (i) to (iv) in the statement of Theorem~\ref{DHLOPP.sec.4.lem.1}, then $f_{P,Q}$ has a unique minimum over $\mathbb{R}^n\!\mathord{\times}\mathbb{R}^m$.
\end{prop}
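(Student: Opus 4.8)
The plan is to invoke Lemma~\ref{DLP.sec.1.lem.1}, which reduces the statement to proving that $A^tA$ is non-singular. Since $Ax=0$ whenever $A^tAx=0$ (because then $\|Ax\|^2=x^tA^tAx=0$) and $A^tA$ is a square matrix of order $n+m$, this is equivalent to $A$ having full column rank $n+m$. By assertion (iii) one has $n+m=d-1$, so the goal becomes to show that the $d-1$ columns of $A$, namely the vectors $p^i-p^0$ for $1\leq i\leq n$ and $q^i-q^0$ for $1\leq i\leq m$, are linearly independent.

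I would argue by contradiction, assuming a relation $\sum_{i=1}^n a_i(p^i-p^0)+\sum_{i=1}^m b_i(q^i-q^0)=0$ whose coefficients do not all vanish. As $P$ is an $n$-simplex by assertion (ii), the vectors $p^i-p^0$ are linearly independent, and similarly the vectors $q^i-q^0$ are linearly independent because $Q$ is an $m$-simplex. Hence the $a_i$ are not all zero and the $b_i$ are not all zero, so the vector $u=\sum_{i=1}^n a_i(p^i-p^0)=-\sum_{i=1}^m b_i(q^i-q^0)$ is non-zero and belongs both to the linear span of the $p^i-p^0$ and to the linear span of the $q^i-q^0$. Consequently the sum of these two linear subspaces, which have dimensions $n$ and $m$, has dimension at most $n+m-1=d-2$. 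Since $\mathrm{aff}(P\cup Q)$ is the translate by $p^0$ of the linear span of all the vectors $p^i-p^0$, $q^i-q^0$, and $q^0-p^0$, it follows that $\dim(P\cup Q)\leq(d-2)+1=d-1$.

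To close the argument, I would use the two results quoted from \cite{DezaOnnPokuttaPournin2024}. By the lower bound (\ref{DLP.sec.1.eq.2.5}), $d(P,Q)\geq\varepsilon(\dim(P\cup Q),k)$, and since $\varepsilon(\cdot,k)$ is strictly decreasing while $\dim(P\cup Q)\leq d-1<d$, we get $\varepsilon(\dim(P\cup Q),k)>\varepsilon(d,k)$. Combining this with assertion (i), which says $d(P,Q)=\varepsilon(d,k)$, gives $\varepsilon(d,k)>\varepsilon(d,k)$, a contradiction. Therefore the columns of $A$ are linearly independent, $A^tA$ is non-singular, and Lemma~\ref{DLP.sec.1.lem.1} yields the unique minimum.

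The only step that is not pure bookkeeping is the middle one: observing that a linear dependence among the columns of $A$ amounts to a non-zero direction shared by the affine hulls of $P$ and $Q$, and that such a shared direction pushes $\dim(P\cup Q)$ strictly below $d$. Once that is in place, the strict monotonicity of $\varepsilon$ together with (\ref{DLP.sec.1.eq.2.5}) forces the contradiction, so I expect no further difficulty.
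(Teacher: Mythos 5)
Your proof is correct, and the first half of it takes a genuinely different route from the paper. The paper proves uniqueness of the minimizer directly: it takes two hypothetical minimizing pairs $(p^\star,q^\star)$ and $(\overline{p}^\star,\overline{q}^\star)$, applies the triangle inequality to their midpoints (using assertion (iv) to rule out zero vectors), and deduces that $\overline{p}^\star-p^\star=\overline{q}^\star-q^\star$ is a non-zero vector common to the translated affine hulls of $P$ and $Q$. You instead reduce the claim, via Lemma~\ref{DLP.sec.1.lem.1}, to the non-singularity of $A^tA$, i.e.\ to the linear independence of the $d-1$ columns of $A$, and you extract the shared non-zero direction from a hypothetical linear dependence, using assertion (ii) (that $P$ and $Q$ are simplices, so each block of columns is independent on its own) where the paper uses the strict case of the triangle inequality and assertion (iv). From that point on the two arguments coincide: a shared direction forces $\dim(P\cup Q)\leq d-1$, and (\ref{DLP.sec.1.eq.2.5}) together with the strict monotonicity of $\varepsilon(\cdot,k)$ gives the contradiction with assertion (i). Your version is arguably more elementary (pure linear algebra rather than a convexity argument) and does not need assertion (iv) at all. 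One trivial point to tidy up: when $n=0$ or $m=0$ there are no coefficients $a_i$ (resp.\ $b_i$), so the sentence ``the $a_i$ are not all zero and the $b_i$ are not all zero'' does not apply verbatim; in that degenerate case a non-trivial dependence already contradicts the linear independence of the edge vectors of the single simplex, so full column rank holds immediately and the $\varepsilon$-monotonicity step is not even needed.
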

\begin{proof}
Denote by $\mathrm{aff}(P)$ and $\mathrm{aff}(Q)$ the affine hulls of $P$ and $Q$, respectively. Consider a point $p^\star$ in $\mathrm{aff}(P)$ and a point $q^\star$ in $\mathrm{aff}(Q)$ such that
\begin{equation}\label{DLP.sec.1.thm.1.eq.1}
\bigl\|q^\star-p^\star\bigr\|=d\bigl(\mathrm{aff}(P),\mathrm{aff}(Q)\bigr)\mbox{.}
\end{equation}

By the above discussion, the pair $(p^\star,q^\star)$ corresponds to a point $(\lambda^\star,\mu^\star)$ in $\mathbb{R}^n\!\mathord{\times}\mathbb{R}^m$ where $f_{P,Q}$ reaches its minimum. Assume that $f_{P,Q}$ also reaches its minimum at a point $(\overline{\lambda}^\star,\overline{\mu}^\star)$ in $\mathbb{R}^n\!\mathord{\times}\mathbb{R}^m$ different from $(\lambda^\star,\mu^\star)$. By construction, $(\overline{\lambda}^\star,\overline{\mu}^\star)$ then provides the coefficients of an affine combination $\overline{p}^\star$ of $p^0$ to $p^n$ and of an affine combination $\overline{q}^\star$ of $q^0$ to $q^m$ such that
\begin{equation}\label{DLP.sec.1.thm.1.eq.2}
\bigl\|\overline{q}^\star-\overline{p}^\star\bigr\|=d\bigl(\mathrm{aff}(P),\mathrm{aff}(Q)\bigr)\mbox{.}
\end{equation}

Since $p^\star$ and $\overline{p}^\star$ are both contained in $\mathrm{aff}(P)$, so is their midpoint. Likewise, the midpoint of $q^\star$ and $\overline{q}^\star$ is contained in $\mathrm{aff}(Q)$. Hence,
\begin{equation}\label{DLP.sec.1.thm.1.eq.3}
d\bigl(\mathrm{aff}(P),\mathrm{aff}(Q)\bigr)\leq\biggl\|\frac{q^\star+\overline{q}^\star}{2}-\frac{p^\star+\overline{p}^\star}{2}\biggl\|\mbox{.}
\end{equation}

However, by the triangle inequality,
\begin{equation}\label{DLP.sec.1.thm.1.eq.4}
\biggl\|\frac{q^\star+\overline{q}^\star}{2}-\frac{p^\star+\overline{p}^\star}{2}\biggl\|\leq\frac{1}{2}\bigl\|q^\star-p^\star\bigr\|+\frac{1}{2}\bigl\|\overline{q}^\star-\overline{p}^\star\bigr\|
\end{equation}
with equality if and only if $q^\star-p^\star$ is a multiple of $\overline{q}^\star-\overline{p}^\star$ by a positive coefficient or one of these vectors is equal to $0$. Under the assumption that assertion (iv) from the statement of Theorem~\ref{DHLOPP.sec.4.lem.1} holds, these vectors are both non-zero. Hence, $q^\star-p^\star$ is a multiple of $\overline{q}^\star-\overline{p}^\star$ by a positive coefficient and according to (\ref{DLP.sec.1.thm.1.eq.1}) and (\ref{DLP.sec.1.thm.1.eq.2}), these vectors must therefore be equal. It immediately follows that the vectors $\overline{p}^\star-p^\star$ and $\overline{q}^\star-q^\star$ also coincide.

Now recall that $(\lambda^\star,\mu^\star)$ and $(\overline{\lambda}^\star,\overline{\mu}^\star)$ are different. As a consequence, so are the pairs $(p^\star,q^\star)$ and $(\overline{p}^\star,\overline{q}^\star)$. Since the vectors $\overline{p}^\star-p^\star$ and $\overline{q}^\star-q^\star$ coincide they must therefore be non-zero. Hence, the translates of $\mathrm{aff}(P)$ and of $\mathrm{aff}(Q)$ through the origin of $\mathbb{R}^d$ intersect in a non-zero vector and the dimension of their union must be less than $\mathrm{dim}(P)+\mathrm{dim}(Q)$. As a consequence
$$
\mathrm{dim}(P\cup{Q})\leq\mathrm{dim}(P)+\mathrm{dim}(Q)\mbox{.}
$$

Therefore, under the assumption that $P$ and $Q$ satisfy the assertion (iii) in the statement of Theorem~\ref{DHLOPP.sec.4.lem.1}, it follows that $P\cup{Q}$ has dimension at most $d-1$. According to (\ref{DLP.sec.1.eq.2.5}), this implies that the distance between $P$ and $Q$ is at least $\varepsilon(d-1,k)$. As a consequence, if the assertion (i) in the statement of Theorem~\ref{DHLOPP.sec.4.lem.1} holds for $P$ and $Q$, then one obtains that
$$
\varepsilon(d,k)\geq\varepsilon(d-1,k)\mbox{.}
$$

However, Theorem 5.1 in \cite{DezaOnnPokuttaPournin2024} states that $\varepsilon(d,k)<\varepsilon(d-1,k)$. By this contradiction, $f_{P,Q}$ has a unique minimum over $\mathbb{R}^n\!\mathord{\times}\mathbb{R}^m$.
\end{proof}

Combining Proposition~\ref{DLP.sec.1.thm.1}, Lemma~\ref{DLP.sec.1.lem.1}, and Theorem~\ref{DHLOPP.sec.4.lem.1}, one obtains a lower bound on $\varepsilon(d,k)$ from (\ref{DLP.sec.1.eq.2}) of the form
\begin{equation}\label{DLP.sec.1.eq.5}
\varepsilon(d,k)\geq\min_{P,Q}\Bigl\{\bigl\|A(A^tA)^{-1}A^tb-b\bigr\|\Bigr\}
\end{equation}
where the minimum ranges over the pairs of lattice $(d,k)$-simplices $P$ and $Q$ whose dimensions sum to $d-1$, for which the matrix $A$ obtained from (\ref{DLP.sec.1.eq.4}) is such that $A^tA$ is non-singular and the vector $b$ obtained from (\ref{DLP.sec.1.eq.4.5}) satisfies
$$
A(A^tA)^{-1}A^tb\neq{b}\mbox{.}
$$

%\begin{rem}
%Observe that the right-hand side of (\ref{DLP.sec.1.eq.5}) is an algebraic fraction of the $d^2$ entries of $A$ and $b$ which can be rearranged into a vector $x$ from $\mathbb{R}^{d^2}$. Moreover, any pair of lattice $(d,k)$-polytopes $P$ and $Q$ gives rise to a $d\mathord{\times}(d-1)$ matrix $A$ and a vector $b$ from $\mathbb{R}^d$ according to (\ref{DLP.sec.1.eq.4}) and (\ref{DLP.sec.1.eq.4.5}) whose entries are integers ranging from $-k$ to $k$, 
%\end{rem}

\section{The $2$-dimensional case}\label{DLP.sec.3}

In this section, we give a formula for $\varepsilon(2,k)$ using the model described in Section~\ref{DLP.sec.1}. Consider two disjoint lattice $(2,k)$-polytopes $P$ and $Q$ that satisfy assertions (i) to (iv) from the statement of Theorem~\ref{DHLOPP.sec.4.lem.1}. Since the dimensions of $P$ and $Q$ sum to $1$, one of these polytopes has dimension $0$ and the other has dimension $1$. We assume without loss of generality that $P$ is a line segment and that $Q$ is made of a single point by exchanging them if needed.

Let us first observe that according to (\ref{DLP.sec.1.eq.4}) and (\ref{DLP.sec.1.eq.4.5}),
$$
\left\{
\begin{array}{l}
A=p^1-p^0\mbox{,}\\
b=q^0-p^0\mbox{.}\\
\end{array}
\right.
$$

As a consequence, (\ref{DLP.sec.1.eq.5}) simplifies into
$$
\varepsilon(2,k)\geq\frac{|(p^1_2-p^0_2)(q^0_1-p^0_1)-(p^1_1-p^0_1)(q^0_2-p^0_2)|}{\sqrt{(p^1_1-p^0_1)^2+(p^1_2-p^0_2)^2}}
$$

It follows that $\varepsilon(2,k)$ is at least the smallest possible value of
\begin{equation}\label{DLP.sec.3.eq.1}
\frac{|x_2x_3-x_1x_4|}{\sqrt{x_1^2+x_2^2}}
\end{equation}
over all the lattice points $x$ contained in the hypercube $[-k,k]^4$ such that neither $x_1^2+x_2^2$ nor $x_2x_3-x_1x_4$ is equal to $0$. We bound (\ref{DLP.sec.3.eq.1}) as follows.
\begin{lem}\label{DLP.sec.3.lem.1}
If $k$ is greater than $1$ then, for every lattice point $x$ in $[-k,k]^4$ such that $x_1^2+x_2^2$ and $x_2x_3-x_1x_4$ are both non-zero,
\begin{equation}\label{DLP.sec.3.lem.1.eq.0}
\frac{|x_2x_3-x_1x_4|}{\sqrt{x_1^2+x_2^2}}\geq\frac{1}{\sqrt{(k-1)^2+k^2}}\mbox{.}
\end{equation}
\end{lem}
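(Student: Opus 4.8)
The plan is to prove (\ref{DLP.sec.3.lem.1.eq.0}) by a short case analysis on the size of the denominator $x_1^2+x_2^2$. Since $x$ has integer coordinates, the numerator $|x_2x_3-x_1x_4|$ is a nonnegative integer, and it is nonzero by hypothesis, hence at least $1$. Therefore, whenever $x_1^2+x_2^2\leq(k-1)^2+k^2$, the left-hand side of (\ref{DLP.sec.3.lem.1.eq.0}) is already at least $1/\sqrt{(k-1)^2+k^2}$ and there is nothing more to do.

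It remains to treat the case $x_1^2+x_2^2>(k-1)^2+k^2$. The key observation here is that, for a lattice point with $|x_1|\leq k$ and $|x_2|\leq k$, this inequality forces $|x_1|=|x_2|=k$. Indeed, assume without loss of generality that $|x_1|\geq|x_2|$; if $|x_1|\leq k-1$, then $x_1^2+x_2^2\leq2(k-1)^2\leq(k-1)^2+k^2$, a contradiction, so $|x_1|=k$; and then $x_2^2>(k-1)^2$ forces $|x_2|=k$ as well. I expect this elementary step to be the heart of the argument: it is what pins down $x_1^2+x_2^2$ to the single value $2k^2$.

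Once $|x_1|=|x_2|=k$, write $x_1=\epsilon_1k$ and $x_2=\epsilon_2k$ with $\epsilon_1,\epsilon_2\in\{-1,1\}$. Then $x_2x_3-x_1x_4=k(\epsilon_2x_3-\epsilon_1x_4)$, and since this quantity is nonzero, the integer $\epsilon_2x_3-\epsilon_1x_4$ is nonzero, so $|x_2x_3-x_1x_4|\geq k$. Hence the left-hand side of (\ref{DLP.sec.3.lem.1.eq.0}) is at least $k/\sqrt{2k^2}=1/\sqrt{2}$. To finish, it suffices to check that $1/\sqrt{2}\geq1/\sqrt{(k-1)^2+k^2}$, which is equivalent to $2\leq(k-1)^2+k^2$ and holds for every integer $k$ greater than $1$. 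Combining the two cases proves the lemma.
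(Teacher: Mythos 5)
Your proof is correct and uses essentially the same two ingredients as the paper's: the numerator is a nonzero integer hence at least $1$, and in the extreme case where $|x_1|=|x_2|$ a common factor cancels to give the bound $1/\sqrt{2}$. The only difference is organizational --- you split on whether $x_1^2+x_2^2$ exceeds $(k-1)^2+k^2$ (which forces $|x_1|=|x_2|=k$), while the paper splits on whether $x_1=x_2$ after normalizing signs; both case analyses are complete and the verification $2\leq(k-1)^2+k^2$ for $k\geq2$ appears in both.
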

\begin{proof}
Consider a lattice point $x$ in $[-k,k]^4$ such that neither $x_1^2+x_2^2$ nor $x_2x_3-x_1x_4$ is equal to $0$. We assume without loss of generality that $x_1$ and $x_2$ are non-negative thanks to the symmetries of $[-k,k]^4$. We review two cases.

First assume that $x_1$ and $x_2$ coincide. In that case,
\begin{equation}\label{DLP.sec.3.lem.1.eq.1}
\frac{|x_2x_3-x_1x_4|}{\sqrt{x_1^2+x_2^2}}=\frac{|x_3-x_4|}{\sqrt{2}}\mbox{.}
\end{equation}

Since $x_2x_3-x_1x_4$ is not equal to $0$, $x_3$ and $x_4$ cannot coincide and the right-hand side of (\ref{DLP.sec.3.lem.1.eq.1}) is at least $1/\sqrt{2}$. As $k$ is greater than $1$,
$$
\frac{1}{\sqrt{(k-1)^2+k^2}}\leq\frac{1}{\sqrt{2}}
$$
and the lemma follows in this case.

Now assume that $x_1$ and $x_2$ are different. Since $|x_2x_3-x_1x_4|$ is at least $1$,
\begin{equation}\label{DLP.sec.3.lem.1.eq.2}
\frac{|x_2x_3-x_1x_4|}{\sqrt{x_1^2+x_2^2}}\geq\frac{1}{\sqrt{x_1^2+x_2^2}}
\end{equation}

Recall that $x_1$ and $x_2$ are integers contained in $[0,k]$. Since they are different, one of them is at most $k-1$. As a consequence,
$$
\frac{1}{\sqrt{x_1^2+x_2^2}}\geq\frac{1}{\sqrt{(k-1)^2+k^2}}
$$
and combining this with (\ref{DLP.sec.3.lem.1.eq.2}) completes the proof.
\end{proof}

By Lemma~\ref{DLP.sec.3.lem.1}, and the preceding discussion, $\varepsilon(2,k)$ is at least the right-hand side of (\ref{DLP.sec.3.lem.1.eq.0}) when $k$ is at least $2$. Theorem~\ref{DLP.sec.3.thm.1} states that this is sharp.

\begin{proof}[Proof of Theorem~\ref{DLP.sec.3.thm.1}]
It suffices to exhibit a lattice point $P$ and a lattice segment $Q$ contained in the square $[0,k]^2$ such that
$$
d(P,Q)=\frac{1}{\sqrt{(k-1)^2+k^2}}\mbox{.}
$$

Such an example is provided by the point in $[0,k]^2$  whose two coordinates are equal to $1$ and any of the two line segments that are incident to the origin and whose other vertex has coordinates $k$ and $k-1$. This point and this line segment are represented in Figure~\ref{DLP.sec.3.fig.1} when $k$ is equal to $4$.
\end{proof}

\begin{figure}[t]
\begin{centering}
\includegraphics[scale=1]{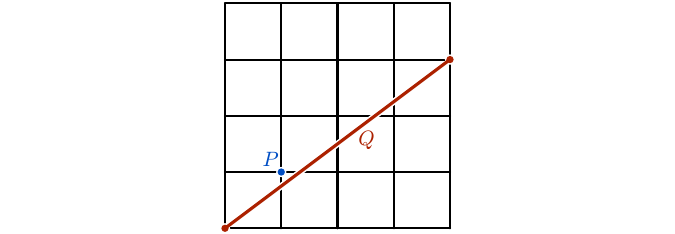}
\caption{A pair of kissing lattice $(2,4)$-polytopes.}\label{DLP.sec.3.fig.1}
\end{centering}
\end{figure}

\begin{rem}\label{DLP.sec.3.rem.1}
The strategy exposed in this section in the $2$-dimensional case can be generalized to any higher dimension. In particular, a quotient similar to (\ref{DLP.sec.3.eq.1}) can be explicitly computed in any fixed dimension $d$ that depends on a lattice point $x$ contained in the hypercube $[-k,k]^{d^2}$. The first $d(d-1)$ coordinates of $x$ are the entries of $A$ and its last $d$ coordinates are the coordinates of $b$. The minimum of that quotient under the constraint that its numerator and denominator are positive provides a lower bound on $\varepsilon(d,k)$. For instance, when $d$ is equal to $3$, the minimal value of the ratio
\begin{equation}\label{DLP.sec.3.rem.1.eq.1}
\displaystyle\frac{|x_1(x_6x_8-x_5x_9)+x_2(x_4x_9-x_6x_7)+x_3(x_5x_7-x_4x_8)|}{\sqrt{(x_1x_5-x_2x_4)^2+(x_1x_6-x_3x_4)^2+(x_2x_6-x_3x_5)^2}}
\end{equation}
over all the lattice points $x$ in $[-k,k]^9$ such that the numerator and the denominator of (\ref{DLP.sec.3.rem.1.eq.1}) are positive is a lower bound on $\varepsilon(3,k)$. However, the expression for this quotient gets complicated as the dimension increases and solving the corresponding integer minimization problem becomes involved.
\end{rem}

\section{The computation of $\varepsilon(d,k)$}\label{DLP.sec.4}

According to the discussion in Section~\ref{DLP.sec.1}, a lower bound on $\varepsilon(d,k)$ can be obtained by considering all the sets of $d+1$ pairwise distinct points from $\{0,\ldots,k\}^d$ and for each such set $\mathcal{S}$, all the partitions of $\mathcal{S}$ into two subsets $\{p^0,\ldots,p^n\}$ and $\{q^0,\ldots,q^m\}$ where $n+m$ is equal to $d-1$. For each such partition, one can build a matrix $A$ and a vector $b$ according to (\ref{DLP.sec.1.eq.4}) and (\ref{DLP.sec.1.eq.4.5}). The smallest possible non-zero value of the right-hand side of (\ref{DLP.sec.1.eq.5}) over all the obtained pairs $(A,b)$ such that $A^tA$ is non-singular will then be a lower bound on $\varepsilon(d,k)$. However, this strategy requires to consider
$$
N=(2^{d+1}-2){(k+1)^d\choose{d+1}}
$$
pairs $(A,b)$. Note that while this number would decrease to at best
$$
\frac{2^{d+1}-2}{2^dd!}{(k+1)^d\choose{d+1}}
$$
if the enumeration could be performed up to the symmetries of the $d$-dimensional hypercube. However, these symmetries are not all easy to handle in practice as one still needs to enumerate all $N$ pairs $(A,b)$ just to check for them.

We adopt a different strategy in order to significantly decrease the search space without having to handle symmetries. The main idea is to do the enumeration coordinate-wise in order to build a list $\mathcal{L}$ of the possible rows for the pair $(A,b)$ for each $n$ and $m$ that sum to $d-1$ and such that $n\leq{m}$, and then building $(A,b)$ back by selecting $d$ pairwise different rows from $\mathcal{L}$. By a \emph{row of $(A,b)$}, we mean a vector $r$ from $\mathbb{R}^d$ whose first $d-1$ entries form a row of $A$ and whose last entry is the corresponding coordinate of $b$. Note that our requirement that the rows of $\mathcal{L}$ selected to build a given pair $(A,b)$ are pairwise distinct is without loss of generality. Indeed, if two of these rows would coincide, a pair of columns of $A^tA$ would be multiples of one another and that matrix would then be singular. We shall see that the size of $\mathcal{L}$ does not depend on $n$ or $m$. As a consequence, this alternative strategy only considers
$$
\biggl\lfloor\frac{d+1}{2}\biggr\rfloor{|\mathcal{L}|\choose{d}}
$$
pairs $(A,b)$. For each of these pairs such that $A^tA$ is non-singular, the right-hand side of (\ref{DLP.sec.1.eq.5}) is evaluated, and the smallest non-zero value obtained for this quantity over all the considered pairs $(A,b)$ is the desired lower bound on $\varepsilon(d,k)$. Note that the efficiency of this strategy depends on how large $\mathcal{L}$ is. Let us get into more details about how we build $\mathcal{L}$.

For a given pair of positive integers $n$ and $m$ that sum to $d-1$, we generate all the possible rows of the pair $(A,b)$ as
\begin{equation}\label{DLP.sec.2.eq.0}
\bigl(x_1-x_0,\ldots,x_n-x_0,y_1-y_0,\ldots,y_m-y_0,y_0-x_0\bigr)
\end{equation}
where $x$ is a point from $\{0,\ldots,k\}^n$, $y$ is a point from $\{0,\ldots,k\}^m$ such that $x$ and $y$ are not both equal to $0$, and $x_0$ and $y_0$ are two integers from  $\{0,\ldots,k\}$. The list obtained from this procedure contains at most $(k-1)^{d+1}$ rows. It can be reduced using the following property.

\begin{prop}\label{DLP.sec.2.prop.1}
Consider a $d\mathord{\times}(d-1)$ matrix $A$ with integer entries such that $A^tA$ is non-singular and a vector $b$ from $\mathbb{Z}^d$. If the pair $(\overline{A},\overline{b})$ is obtained by dividing each row of the pair $(A,b)$ by the greatest common divisor of its coordinates and by negating a subset of the resulting rows, then
\begin{equation}\label{DLP.sec.2.prop.1.eq.0}
\bigl\|A(A^tA)^{-1}A^tb-b\bigr\|\geq\bigl\|\overline{A}(\overline{A}^t\overline{A})^{-1}\overline{A}^t\overline{b}-\overline{b}\bigr\|
\end{equation}
and the two sides of this inequality are either both zero or both positive.
\end{prop}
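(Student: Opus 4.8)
The plan is to observe that passing from $(A,b)$ to $(\overline{A},\overline{b})$ amounts to left‑multiplying both $A$ and $b$ by a fixed invertible diagonal matrix, and then to exploit two features of that matrix: it is non‑expansive, which gives the inequality, and it is invertible, which gives the dichotomy.

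First I would record the geometric meaning of the two sides. Since $A^tA$ is non‑singular, $A$ has full column rank $d-1$, so $A(A^tA)^{-1}A^t$ is the orthogonal projector onto the column space $\mathrm{col}(A)$; hence the left‑hand side of (\ref{DLP.sec.2.prop.1.eq.0}) equals $\min_{\chi\in\mathbb{R}^{d-1}}\|A\chi-b\|$, the distance from $b$ to $\mathrm{col}(A)$. This is exactly the content of Lemma~\ref{DLP.sec.1.lem.1}, viewed as a purely linear‑algebraic statement. The same will hold for the barred quantities once we know that $\overline{A}^t\overline{A}$ is non‑singular.

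Next I would write $\overline{A}=EA$ and $\overline{b}=Eb$, where $E$ is the $d\mathord{\times}d$ diagonal matrix whose $i$‑th diagonal entry is $\pm1/g_i$, with $g_i\geq1$ the greatest common divisor of the coordinates of the $i$‑th row of $(A,b)$ (taking $g_i=1$ for any zero row, so that $E$ is always defined). Since $E$ is invertible and $A$ has full column rank $d-1$, so does $\overline{A}=EA$, whence $\overline{A}^t\overline{A}$ is non‑singular and the right‑hand side of (\ref{DLP.sec.2.prop.1.eq.0}) is well defined and equals $\min_{\chi}\|\overline{A}\chi-\overline{b}\|$. Because each $g_i\geq1$, for every $v\in\mathbb{R}^d$ one has $\|Ev\|^2=\sum_i v_i^2/g_i^2\leq\|v\|^2$, i.e. $E$ is non‑expansive. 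The inequality (\ref{DLP.sec.2.prop.1.eq.0}) then follows by taking $\chi^\star$ attaining $\min_{\chi}\|A\chi-b\|$ and writing
$$\bigl\|\overline{A}(\overline{A}^t\overline{A})^{-1}\overline{A}^t\overline{b}-\overline{b}\bigr\|\leq\|\overline{A}\chi^\star-\overline{b}\|=\|E(A\chi^\star-b)\|\leq\|A\chi^\star-b\|=\bigl\|A(A^tA)^{-1}A^tb-b\bigr\|.$$

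Finally, for the dichotomy: the left‑hand side of (\ref{DLP.sec.2.prop.1.eq.0}) vanishes if and only if $b\in\mathrm{col}(A)$, that is, $A\chi=b$ for some $\chi$; applying the invertible map $E$, this is equivalent to $\overline{A}\chi=\overline{b}$ for some $\chi$, i.e. $\overline{b}\in\mathrm{col}(\overline{A})$, which is exactly the vanishing of the right‑hand side. Hence the two sides are both zero or both positive. There is no genuine obstacle here; the only points requiring a little care are the treatment of the greatest common divisor (the zero‑row convention, needed so that $E$ is well defined and $g_i\geq1$) and making sure the minimization is used in the correct direction — plugging the minimizer of the original least‑squares problem into the transformed one rather than the reverse.
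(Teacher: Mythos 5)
Your proof is correct, and the engine is the same as the paper's: dividing each row of $(A,b)$ by its gcd $g_i\geq1$ scales the $i$-th coordinate of every residual $A\chi-b$ by $1/g_i\leq1$, so the least-squares optimum can only decrease. The paper phrases this through the functions $f_{P,Q}$ and the identity $f_{P,Q}(\lambda,\mu)=\sum_j r_j^2(\cdots)^2\geq f_{\overline{P},\overline{Q}}(\lambda,\mu)$, then takes minima; you package the same fact as non-expansiveness of the invertible diagonal matrix $E$ with $\overline{A}=EA$, $\overline{b}=Eb$, and plug the original minimizer $\chi^\star$ into the transformed problem. Your version buys two things the paper's proof leaves implicit: (a) an explicit verification that $\overline{A}^t\overline{A}$ is non-singular (since $E$ is invertible and $A$ has full column rank), without which the right-hand side of (\ref{DLP.sec.2.prop.1.eq.0}) is not even well defined, together with the zero-row convention $g_i=1$ needed to make $E$ well defined; and (b) a cleaner two-way dichotomy, since $b\in\mathrm{col}(A)$ if and only if $Eb\in\mathrm{col}(EA)$ by invertibility of $E$, whereas the paper argues only the direction it needs by a coordinate-wise vanishing argument. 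Both proofs are complete; yours is the tighter linear-algebraic formulation of the same idea.
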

\begin{proof}
Pick two non-negative integers $n$ and $m$ that sum to $d-1$. Denote $b$ by $q^0$. Likewise, denote by $p^1$ to $p^n$ the first $n$ columns of $A$ and consider the points $q^1$ to $q^m$ from $\mathbb{Z}^d$ such that $q^1-q^0$ to $q^m-q^0$ are the last $m$ columns of $A$. According to the construction described in Section~\ref{DLP.sec.1},
\begin{equation}\label{DLP.sec.2.prop.1.eq.1}
\bigl\|A(A^tA)^{-1}A^tb-b\bigr\|^2=\min_{\substack{\lambda\in\mathbb{R}^n\\\mu\in\mathbb{R}^m}}f_{P,Q}(\lambda,\mu)
\end{equation}
where $P$ is the convex hull of $p^0$ to $p^n$ and $Q$ that of $q^0$ to $q^m$. Further denote by $(\overline{A},\overline{b})$ the pair obtained by dividing each row of $(A,b)$ by the greatest common divisor of its coordinates and by negating a fixed (but otherwise arbitrary) subset of the resulting rows. As above,
\begin{equation}\label{DLP.sec.2.prop.1.eq.2}
\bigl\|\overline{A}(\overline{A}^t\overline{A})^{-1}\overline{A}^t\overline{b}-\overline{b}\bigr\|^2=\min_{\substack{\lambda\in\mathbb{R}^n\\\mu\in\mathbb{R}^m}}f_{\overline{P},\overline{Q}}(\lambda,\mu)
\end{equation}
where $\overline{P}$ and $\overline{Q}$ are the convex hulls of the points $\overline{p}^0$ to $\overline{p}^n$ and $\overline{q}^0$ to $\overline{q}^m$ that are extracted from $(\overline{A},\overline{b})$ just as the points $p^i$ and $q^i$ are extracted from $(A,b)$. By construction, for any pair $(\lambda,\mu)$ of vectors in $\mathbb{R}^n\!\mathord{\times}\mathbb{R}^m$,
\begin{multline}\label{DLP.sec.2.prop.1.eq.2.5}
\sum_{j=1}^d\biggl(-q^0_j+\sum_{i=1}^n\lambda_ip^i_j-\sum_{i=1}^m\mu_i\bigl(q^i_j-q^0_j\bigr)\biggr)^{\!2}\\
=\sum_{j=1}^dr_j^2\biggl(-\overline{q}^0_j+\sum_{i=1}^n\lambda_i\overline{p}^i_j-\sum_{i=1}^m\mu_i\bigl(\overline{q}^i_j-\overline{q}^0_j\bigr)\biggr)^{\!2}
\end{multline}
where $r_1$ to $r_d$ the greatest common divisors of the rows of $(A,b)$. Observe that according to (\ref{DLP.sec.1.eq.1.5}), the right-hand side of this equality is precisely $f_{P,Q}(\lambda,\mu)$. Since the numbers $r_1$ to $r_d$ are not less than $1$, its right-hand side is at least $f_{\overline{P},\overline{Q}}(\lambda,\mu)$ and it follows that, for every $(\lambda,\mu)$ in $\mathbb{R}^n\!\mathord{\times}\mathbb{R}^m$,
$$
f_{P,Q}(\lambda,\mu)\geq{f_{\overline{P},\overline{Q}}(\lambda,\mu)}\mbox{.}
$$

In turn, by (\ref{DLP.sec.2.prop.1.eq.1}) and (\ref{DLP.sec.2.prop.1.eq.2}), the desired inequality holds. It remains to show that if the right-hand side of (\ref{DLP.sec.2.prop.1.eq.0}) is equal to $0$, then so is its left-hand side.

Assume that the right-hand side of (\ref{DLP.sec.2.prop.1.eq.0}) is equal to $0$. In that case, there exists a pair $(\lambda,\mu)$ of vectors in $\mathbb{R}^n\!\mathord{\times}\mathbb{R}^m$ such that $f_{\overline{P},\overline{Q}}(\lambda,\mu)$ is equal to $0$. According to (\ref{DLP.sec.1.eq.1.5}), $f_{\overline{P},\overline{Q}}(\lambda,\mu)$ is the squared norm of a vector and since it is equal to $0$, all of the coordinates of that vector must be equal to $0$. In other words, for every integer $j$ satisfying $1\leq{j}\leq{d}$,
$$
-\overline{q}^0_j+\sum_{i=1}^n\lambda_i\overline{p}^i_j-\sum_{i=1}^m\mu_i\bigl(\overline{q}^i_j-\overline{q}^0_j\bigr)=0
$$
and it follows from (\ref{DLP.sec.1.eq.1.5}) and (\ref{DLP.sec.2.prop.1.eq.2.5}) that $f_{P,Q}(\lambda,\mu)$ must vanish. According to (\ref{DLP.sec.2.prop.1.eq.1}), the left-hand side of (\ref{DLP.sec.2.prop.1.eq.0}) is then equal to $0$, as desired.
\end{proof}

\begin{table}[b]
\begin{center}
\begin{tabular}{>{\centering}p{0.7cm}cccccccccccccc}
\multirow{2}{*}{$d$}&  \multicolumn{10}{c}{$k$}\\
\cline{2-11}
& $1$ & $2$ & $3$ & $4$ & $5$ & $6$ & $7$ & $8$ & $9$ & $10$\\
\hline
%$2$ & $2$ & $5$ & $11$ & $17$ & $29$ & $35$ & $53$ & $65$ & $83$ & $95$ \\
$3$ & $6$ & $24$ & $72$ & $144$ & $288$ & $432$ & $720$ & $1008$ & $1440$ & $1872$\\
$4$ & $14$ & $89$ & $359$ & $929$ & $2189$ & $4019$ & $7469$ & $11969$\\
$5$ & $30$ & $300$ & $1620$ & $5400$ & $15120$\\
$6$ & $62$ & $965$ & $6971$\\
$7$ & $126$ & $3024$ & 
\end{tabular}
\end{center}
\caption{The number of rows in $\mathcal{L}$ as a function of $d$ and $k$.}
\label{DLP.sec.2.tab.1}
\end{table}

By Proposition~\ref{DLP.sec.2.prop.1}, we can assume that when several of the generated rows are multiples of one another, only the one among them whose coordinates are relatively prime and whose first non-zero coordinate is positive is included in $\mathcal{L}$. It should be observed that, before $\mathcal{L}$ is reduced as we have just described, its size does not depend on $n$ or $m$. As announced, this property still holds once $\mathcal{L}$ has been reduced. Indeed, observe that the rows generated by (\ref{DLP.sec.2.eq.0}) with the same two points $x$ and $y$ and the same scalars $x_0$ and $y_0$ but with different values of $n$ and $m$ can be recovered from one another by adding $y_0-x_0$ to (or subtracting this quantity from) certain of their coordinates. Hence, all of these rows have the same greatest common divisor for their coordinates.

We report in Table~\ref{DLP.sec.2.tab.1} as a function of $d$ and $k$ the number of rows contained in $\mathcal{L}$ after this procedure has been carried out. Note that, when $k$ is equal to $1$, $d$ to $3$, $n$ to $1$, and $m$ to $1$, there are only $6$ rows in $\mathcal{L}$:
$$
\mathcal{L}=\Bigl\{(1,0,0),\,(0,1,0),\,(1,-1,0),\,(0,1,-1),\,(1,1,-1),\,(1,0,-1)\Bigr\}\mbox{.}
$$

Twenty pairs $(A,b)$ are generated from this list of rows. When $k$ is equal to $3$ and $d$ to $4$, the number of pairs $(A,b)$ that have to be considered with the approach outlined at the beginning of the section is
$$
(2^5-2){4^4\choose5}=264\,286\,471\,680
$$
which would shrink down to
$$
\frac{2^5-2}{2^44!}{4^4\choose5}=688\,246\,020
$$
if these pairs could be enumerated up to the symmetries of the hypercube. However, there is no easy way to test for these symmetries without reviewing all of the $264\,286\,471\,680$ pairs. With our approach, $|\mathcal{L}|$ is equal to $359$ in that case as shown in Table~\ref{DLP.sec.2.tab.1} and we only need to consider
$$
2{359\choose{4}}=1\,361\,176\,502
$$
pairs $(A,b)$ in order to compute our lower bound on $\varepsilon(4,3)$.

This strategy does not only provide a lower bound on $\varepsilon(d,k)$ but also pairs $(A,b)$ that achieve this lower bound. Keeping track of the points $x$ and $y$ and of the scalars $x_0$ and $y_0$ that are used to build each row in $\mathcal{L}$ according to (\ref{DLP.sec.2.eq.0}), one can recover two lattice $(d,k)$-polytopes $P$ and $Q$ such that the obtained lower bound on $\varepsilon(d,k)$ is precisely the distance of the affine hulls of $P$ and $Q$. If the distance between these affine hulls coincides with the distance between $P$ and $Q$, which can easily be checked from the pair $(A,b)$ according to Remark~\ref{DLP.sec.1.rem.1}, then this lower bound is sharp. Interestingly, using this observation, all the lower bounds on $\varepsilon(d,k)$ that we have obtained using the presented computational strategy have turned out to be the precise value of $\varepsilon(d,k)$.

The known values of $\varepsilon(d,k)$ are reported in Table~\ref{DLP.sec.2.tab.2} in the case when $d$ is at least $3$. Recall that $\varepsilon(2,k)$ is given by Theorem~\ref{DLP.sec.3.thm.1} for all $k$ greater than $1$. Moreover, $\varepsilon(2,1)$ is equal to $1/\sqrt{2}$. The values shown in bold characters in the table are the ones that were obtained using the strategy described in this section. The other values of $\varepsilon(d,k)$ given in the table were previously obtained in \cite{DezaOnnPokuttaPournin2024}. Let us now describe the polytopes $P$ and $Q$ that have been obtained in each case whose distance is precisely $\varepsilon(d,k)$.

If $k$ is equal to $2$, $4$, $5$, $6$, $7$, or $8$, then $\varepsilon(3,k)$ is achieved by the line segment $P$ with vertices $(0,0,0)$ and $(k-1,k,k)$ and the line segment $Q$ with vertices $(k,1,2)$ and $(0,k,k-1)$. These two line segments are shown in Figure~\ref{DLP.sec.2.fig.1} when $k$ is equal to $4$. Note that in general, their distance is
\begin{equation}\label{DLP.sec.2.eq.1}
d(P,Q)=\frac{1}{\sqrt{2(2k^2-4k+5)(2k^2-2k+1)}}
\end{equation}
and it is tempting to ask whether $\varepsilon(3,k)$ is equal to this value for every integer $k$ greater than $8$. In any case, note that the right-hand side of (\ref{DLP.sec.2.eq.1}) provides an upper bound on $\varepsilon(3,k)$ that decreases like $1/(2\sqrt{2}k^2)$ as $k$ goes to infinity. In the remaining two cases, when $k$ is equal to either $1$ or $3$, the line segments that achieve $\varepsilon(3,k)$ do not follow the pattern we have just described. Indeed, $\varepsilon(3,1)$ is the distance between a diagonal $P$ of the cube $[0,1]^3$ and a diagonal $Q$ of one of its square faces such that $P$ and $Q$ are disjoint (see also \cite{DezaOnnPokuttaPournin2024}) while $\varepsilon(3,3)$ is the distance between the line segment with vertices $(0,0,0)$ and $(2,3,3)$ and the line segment with vertices $(3,2,0)$ and $(0,1,2)$.

\begin{figure}[t]
\begin{centering}
\includegraphics[scale=1]{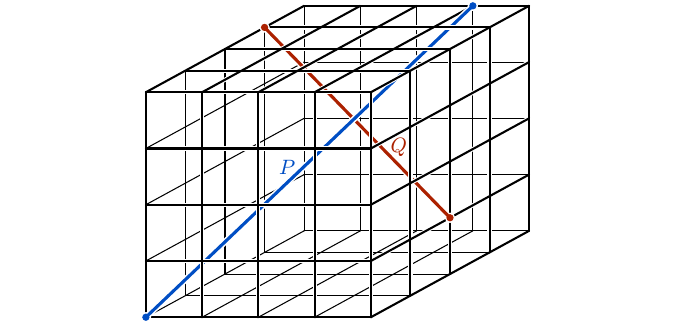}
\caption{A pair of kissing lattice $(3,4)$-polytopes.}\label{DLP.sec.2.fig.1}
\end{centering}
\end{figure}

In the $4$-dimensional case, the values of $\varepsilon(4,k)$ reported in Table~\ref{DLP.sec.2.tab.2} are always achieved by a line segment $P$ and a triangle $Q$ as follows. When $k$ is equal to $1$, the vertices of $P$ are $(0,0,0,0)$ and $(1,1,1,1)$ while the vertices of $Q$ are $(1,0,0,0)$, $(0,1,1,0)$, and $(0,1,0,1)$. When $k$ is greater than $1$, the vertices of $P$ are $(0,0,0,0)$ and $(1,2,1,2)$ and those of $Q$ are $(2,2,1,0)$, $(0,1,0,2)$, and $(0,0,2,1)$. When $k$ is equal to $3$, the vertices of $P$ are $(0,0,1,0)$ and $(2,3,3,3)$ and the vertices of $Q$ are $(3,0,3,2)$, $(0,2,0,3)$, and $(0,3,3,0)$.

The unique value of $\varepsilon(5,k)$ reported in Table~\ref{DLP.sec.2.tab.2} is the one when $k$ is equal to $1$. This value is the distance between the diagonal of the hypercube $[0,1]^5$ incident to the origin of $\mathbb{R}^5$ and the tetrahedron with vertices $(1,1,0,0,0)$, $(0,1,0,1,1)$, $(0,0,1,0,1)$, and $(0,0,1,1,0)$. Finally, $\varepsilon(6,k)$ is equal to the distance between the diagonal of the hypercube $[0,1]^6$ incident to the origin of $\mathbb{R}^6$ and the $5$\nobreakdash-dimensional simplex with vertices $(1,0,1,1,0,0)$, $(1,0,0,0,1,1)$, $(0,1,1,0,1,1)$, $(0,1,0,1,0,1)$, and $(0,1,0,1,1,0)$.

\begin{rem}
It is noteworthy that, while $\varepsilon(2,k)$ can be guessed from the pairs of kissing polytopes obtained for the first few values of $k$, this is not the case for $\varepsilon(d,k)$ when $k$ is fixed. Even when $k$ is equal to $1$, the pairs of kissing polytopes known for the first few values of $d$ do not exhibit a clear pattern.
\end{rem}

\bibliography{SmallKissingPolytopes}
\bibliographystyle{ijmart}

\end{document}